 \theoremstyle{plain}
 \newtheorem{thm}{Theorem}[]
 \numberwithin{equation}{section} 
 \numberwithin{figure}{section} 
 \theoremstyle{plain}
 \theoremstyle{definition}
 \theoremstyle{plain}
 \theoremstyle{plain}
 \newtheorem{lem}[thm]{Lemma} 
 \theoremstyle{remark}
 \newtheorem{rem}[thm]{Remark}
 \theoremstyle{plain}
\newcommand {\R}{\mathbb{R}}
\title{Radial Symmetry of solutions to diffusion equations with discontinuous nonlinearities}
\author{Joaquim Serra}
\thanks{The author was supported by grants MTM 2008-06349-C03-01 (Spain) and 2009SGR-345 (Catalunya)}
\address{Universitat Politècnica de Catalunya\\ Departament de Matemàtica Aplicada I \\
Diagonal 647, 08028 Barcelona, Spain}
\email{joaquim.serra@upc.edu}
\begin{document}
\begin{abstract}
We prove a radial symmetry result for bounded nonnegative solutions to the $p$-Laplacian semilinear equation $-\Delta_p u=f(u)$ posed in a ball of $\R^n$ and involving discontinuous nonlinearities $f$. When $p=2$ we obtain a new result which holds in every dimension $n$ for certain positive discontinuous $f$. When $p\ge n$ we prove radial symmetry for every locally bounded nonnegative $f$. Our approach is an extension of a method of P. L. Lions for the case $p=n=2$. It leads to radial symmetry combining the isoperimetric inequality and the Pohozaev identity.
\end{abstract}

\maketitle
\parskip = 0pt
\parindent = 12pt

\section{Introduction}
We consider positive solutions of
\begin{equation}\label{equaciolap0}
\left\{\begin{array}{ll}
-\Delta u = f(u) \quad& \mbox{in } \Omega \subset \R^n\,, \\
u>0 & \mbox{in } \Omega\,, \\
u=0 & \mbox{on } \partial \Omega\,,
\end{array}
\right.
\end{equation}
where $\Omega$ is a ball. A classical theorem of Gidas-Ni-Nirenberg \cite{GiNiNir} states that if $f=f_1+f_2$ with $f_1$ Lipschitz and $f_2$ nondecreasing, then a solution $u\in C^2(\overline{\Omega})$ to \eqref{equaciolap0} has radial symmetry. Since $f_2$ might be any nondecreasing function, this result allows $f$ to be discontinuous, but only with increasing jumps. Besides this, the only other general result for $f$ discontinuous is, to our knowledge, the one of P.~L.~Lions \cite{Li}, that states radial symmetry of solutions for every locally bounded $f\ge 0$ in dimension $n=2$. 

In this paper we establish radial symmetry of solutions to \eqref{equaciolap0} in every dimension $n\ge 3$ under the assumption
\[\phi\le f\le \frac{2n}{n-2}\,\phi\]
for some nonincreasing function $\phi\ge 0$. In addition, we also obtain results for the $p$-Laplacian equation 
\begin{equation}\label{equaciou}
\left\{\begin{array}{l}
-\Delta_p u := -\nabla\cdot(|\nabla u|^{p-2} \nabla u) = f(u)\quad \text{in }\Omega\,,
\\ u\ge 0 \quad \text{in } \Omega\,,
\\ u=0 \quad \text{on } \partial \Omega\,,
\end{array}\right.
\end{equation}
where $\Omega\subset\R^n$ is a ball. For instance, under the assumption $p\ge n$, we establish radial symmetry of bounded solutions to \eqref{equaciou} for every $f\ge 0$ locally bounded but possibly discontinuous.

The result to be proved in this paper is the following:
\begin{thm}\label{mainresult}
Let $\Omega$ be a ball in $\R^n$, $n\ge 2$, and let $1<p<\infty$. Assume that $f\in L^\infty_{\text{loc}}([0,+\infty))$ is nonnegative. Let $u\in C^1(\Omega)\cap C^0(\overline \Omega)$ be a solution of $\eqref{equaciou}$ in the weak sense. Assume that either
\begin{enumerate}
\item[(a)]  $p\ge n$, 
\end{enumerate}
or 
\begin{enumerate}
\item[(b)]  $p<n$ and, for some nonincreasing function $\phi\ge0$, we have $\phi\le f\le \frac{np}{n-p} \phi\,.$
\end{enumerate}  

Then, $u$ is a radially symmetric and nonincreasing function. Moreover, $\frac{\partial u}{\partial r}<0$ in $\{0<u<\max_{\overline \Omega} u\}$, that will be an annulus or a punctured ball.
\end{thm}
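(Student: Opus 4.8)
The plan is to follow, and extend to the $p$-Laplacian, the strategy of P.~L.~Lions for $p=n=2$: combine the isoperimetric inequality with the Pohozaev identity so that radial symmetry appears as the equality case of a chain of inequalities. Throughout, write $\Omega=B_R$, $M=\max_{\overline\Omega}u$, $F(t)=\int_0^t f$, $\omega_n=|B_1|$, and for $t\in(0,M)$ set $\mu(t)=|\{u>t\}|$ and $G(t)=\int_{\{u>t\}}f(u)\,dx$. Testing the weak formulation of \eqref{equaciou} against Lipschitz truncations of $(u-t)^+$, and using Sard's theorem and the coarea formula, I would first establish, for a.e.\ $t\in(0,M)$, the identities $\int_{\{u=t\}}|\nabla u|^{p-1}\,d\mathcal H^{n-1}=G(t)$ and $-\mu'(t)=\int_{\{u=t\}}|\nabla u|^{-1}\,d\mathcal H^{n-1}$, together with the global relations $\int_\Omega|\nabla u|^p\,dx=\int_0^M G(t)\,dt=\int_\Omega u\,f(u)\,dx$ and $\int_{\partial\Omega}|\partial_\nu u|^{p-1}\,d\sigma=G(0)=\int_\Omega f(u)\,dx$. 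This requires knowing that $\{\nabla u=0\}\cap\{0<u<M\}$ and $\{u=M\}$ are Lebesgue-null (on those sets $\Delta_p u=0$, forcing $f(u)=0$), which I would handle separately and which is in fact part of the assertion. Then, applying H\"older's inequality with exponents $p$ and $p/(p-1)$ to $\mathcal H^{n-1}(\{u=t\})=\int_{\{u=t\}}|\nabla u|^{(p-1)/p}|\nabla u|^{-(p-1)/p}\,d\mathcal H^{n-1}$, followed by the isoperimetric inequality $\mathcal H^{n-1}(\{u=t\})=\mathrm{Per}(\{u>t\})\ge n\omega_n^{1/n}\mu(t)^{(n-1)/n}$, gives for a.e.\ $t\in(0,M)$
\[(n\omega_n^{1/n})^p\,\mu(t)^{p(n-1)/n}\le G(t)\,(-\mu'(t))^{p-1},\]
with equality iff $\{u>t\}$ is a ball and $|\nabla u|$ is constant on $\{u=t\}$; the same two inequalities on $\partial B_R$ yield $\int_{\partial\Omega}|\partial_\nu u|^p\,d\sigma\ge G(0)^{p/(p-1)}(n\omega_nR^{n-1})^{-1/(p-1)}$, with equality iff $|\partial_\nu u|$ is constant on $\partial\Omega$.

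Next I would record the Pohozaev identity, obtained by testing \eqref{equaciou} against $x\cdot\nabla u$ on $\Omega=B_R$,
\[\frac{n-p}{p}\int_\Omega|\nabla u|^p\,dx+\frac{(p-1)R}{p}\int_{\partial\Omega}|\partial_\nu u|^p\,d\sigma=n\int_\Omega F(u)\,dx,\]
and combine it with the two inequalities above to pinch. When $p=n$ the first term drops (Lions' case): the Pohozaev identity and the boundary inequality give a lower bound $\int_\Omega F(u)\,dx\ge K\,G(0)^{n/(n-1)}$ for an explicit $K=K(n)$, while multiplying the level-set differential inequality by $f(t)$, using $G'(t)=f(t)\mu'(t)$ to telescope, and integrating over $(0,M)$ gives the reverse bound $\int_\Omega F(u)\,dx\le K\,G(0)^{n/(n-1)}$ with the \emph{same} $K$; hence every inequality used is an equality, and in particular $\{u>t\}$ is a ball for a.e.\ $t$. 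For $p\ge n$ in general the first Pohozaev term has the favourable sign and is absorbed in the same way, using also $\int_\Omega|\nabla u|^p=\int_\Omega u f(u)$. For $p<n$ that term is unfavourable, and here the hypothesis $\phi\le f\le\frac{np}{n-p}\phi$ with $\phi$ nonincreasing enters through its pointwise consequences $u f(u)\le\frac{np}{n-p}F(u)$ and $G(t)\le\frac{np}{n-p}f(t)\mu(t)$ (both follow from $t\phi(t)\le\Phi(t)\le F(t)$ and $\int_{\{u>t\}}\phi(u)\,dx\le\phi(t)\mu(t)$), which is exactly what is needed to keep the lower and upper bounds matching; the constant $\frac{np}{n-p}$ is sharp for this reason.

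Once $\{u>t\}$ is a ball for a.e.\ $t$, nestedness and continuity of $u$ force these balls to be concentric, so $u$ is radially symmetric and nonincreasing about the common centre. Equality in the H\"older step then makes $|\nabla u|$ constant on a.e.\ level set inside $\{0<u<M\}$, and it is $>0$ there because $-\mu'(t)<\infty$ whenever $\mu(t)>0$; this excludes plateaux at intermediate values and gives $\partial u/\partial r<0$ on $\{0<u<M\}$, which is therefore a radial open shell, i.e.\ an annulus, or a punctured ball when the maximum is attained only at the centre.

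The step I expect to be hardest is the pinching: forcing the algebra so that the lower bound coming from Pohozaev and the upper bound coming from the level-set differential inequality agree — which in case (b) works precisely because of the constant $\frac{np}{n-p}$. A close second is the soft analysis with $u$ only $C^1$ and $f$ merely bounded and possibly discontinuous: justifying the level-set identities for a.e.\ regular value, the absolute continuity of $\mu$ with $\mu(M)=0$, and the handling of $\{f(u)=0\}$, where the differential inequality carries no information and $G$ is locally constant.
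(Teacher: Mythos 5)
Your overall strategy is indeed the paper's (Lions-type: level-set H\"older plus isoperimetric inequality pinched against Pohozaev, with radial symmetry read off from the equality case), and your case $p=n$ is essentially Kesavan--Pacella. But the central mechanism for $p\neq n$ is missing. Multiplying your differential inequality, raised to the power $\frac1{p-1}$, by $f(t)$ and telescoping with $G'=f\mu'$ yields $p'c_n^{p'}\int_0^M f(t)\,\mu(t)^{\frac{p(n-1)}{n(p-1)}}\,dt\le G(0)^{p'}$, and the exponent of $\mu$ equals $1$ only when $p=n$; for $p\neq n$ the crude replacement $\mu\le|\Omega|$ destroys sharpness, so the resulting bound cannot coincide with the Pohozaev constant and no equality case is forced -- and in any event this produces only $\int_\Omega F(u)$, not the combination $n\int_\Omega F(u)+\frac{p-n}{p}\int_\Omega uf(u)$ that Pohozaev actually controls. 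The missing idea is the composite distribution function $K=I^{p'}J^{\beta}$ with $I(t)=G(t)$, $J(t)=\mu(t)$ and $\beta=\frac{p-n}{n(p-1)}$: its a.e.\ derivative is $-K'=\{p'I^{p'-1}J^{\beta}f+\beta I^{p'}J^{\beta-1}\}(-J')$, and after inserting the H\"older--isoperimetric inequality the exponents cancel exactly, so $\int_0^M(-K')$ bounds $c_n^{p'}\frac{p'}{n}\bigl(n\int_\Omega F(u)+\frac{p-n}{p}\int_\Omega uf(u)\bigr)$ with the sharp constant. Your pointwise consequences of (b), namely $G(t)\le\frac{np}{n-p}f(t)\mu(t)$ and $uf(u)\le\frac{np}{n-p}F(u)$, are precisely the raw material that makes the bracket $p'f+\beta I/J$ nonnegative, but you never assemble them into an exact derivative that integrates up; moreover, since $\beta<0$ in case (b), one must also prove that $K$ (equivalently $\mu$) is absolutely continuous for $t<M$ before deducing monotonicity of $K$ from $-K'\ge0$ a.e., which the paper gets from Lou's theorem on the critical set (available there because $f\ge\phi>0$ below the top level). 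Your blanket claim that $\{u=M\}$ is Lebesgue-null ``as part of the assertion'' is false -- the theorem and the paper's explicit example allow $u$ to be constant on a ball of positive radius -- and in case (a) nullity of $\{\nabla u=0\}\cap\{0<u<M\}$ is neither provable by your argument (Lou only gives $f(u)=0$ a.e.\ there, vacuous where $f$ vanishes) nor needed: it suffices to use the one-sided inequality $-\mu'(t)\ge\int_{u^{-1}(t)}|\nabla u|^{-1}\,d\mathcal H^{n-1}$, which is all your differential inequality requires, provided the bracket multiplying $-\mu'$ is known to be nonnegative.

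There is also a genuine gap at the end: ``nestedness and continuity force these balls to be concentric'' is not true -- a nested family of balls can have drifting centers, and one can build $C^1$ functions whose superlevel sets are non-concentric balls. Concentricity must be proved using the second piece of equality information, the constancy of $|\nabla u|$ on $\partial\{u>t\}$; the paper's Lemma \ref{lema3} does this by showing $x(t)$ and $\rho(t)$ are locally Lipschitz and differentiating $u(x(t)\pm\rho(t)z)\equiv t$ at a point where $\dot x(t_0)\neq0$ to reach a contradiction. Similarly, $|\nabla u|>0$ on every intermediate level sphere does not follow from $-\mu'(t)<\infty$ a.e.\ (that inequality is only available for a.e.\ $t$, and a monotone function may have infinite derivative on a null set); the paper obtains it from the Gauss--Green identity on a level ball together with the strong maximum principle, which rules out $f(u)\equiv0$ on a superlevel set. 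Both of these endgame steps need arguments that are absent from your sketch.
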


This result follows the approach introduced in 1981 by P. L. Lions within the paper  \cite{Li}, where the case $p=n=2$ of Theorem \ref{mainresult} is proved (also with the hypothesis $f\ge 0$). In the same direction, Kesavan and Pacella \cite{KePa} established the cases $p=n\ge 2$ of Theorem \ref{mainresult}. In Lions' method, the isoperimetric inequality and the Pohozaev identity are combined to conclude the symmetry of $u$.

For some nonlinearities $f$ which change sign, there exist positive solutions of \eqref{equaciou} in a ball which are not radially symmetric, even with $p=2$ and $f$ Hölder continuous (see \cite{Br2} for an example).


For $1<p<\infty$, assuming that $f$ is locally Lipschitz and positive, and that $u\in C^1(\overline \Omega)$ is a positive solution of \eqref{equaciou} in a ball, Damascelli and Pacella \cite{DaPa} ($1<p<2$) and Damascelli and Sciunzi \cite{DaSc} ($p>2$) succeeded in applying the moving planes method to prove the radial symmetry of $u$. 

Another symmetry result for \eqref{equaciolap0} with possibly non-Lipschitz $f$ is due to Dolbeault and Felmer \cite{DF}. They assume that $f$ is continuous and that, in a neighborhood of each point of its domain, $f$ is either decreasing, or is the sum of a Lipschitz and a nondecreasing functions. If, in addition, $f\ge 0$, solutions $u\in C^1(\Omega)\cap C^0(\overline \Omega)$ to \eqref{equaciolap0} in a ball are radially symmetric. A similar result for the $p$-Laplacian equation \eqref{equaciou} is found in \cite{DFM}. These results use a local version of the moving planes technique. 

Under the weaker assumption that $f\ge 0$ is only continuous, for $1<p<\infty$, Brock \cite{Br} proved that $C^1(\overline \Omega)$ positive solutions of \eqref{equaciou} are radially symmetric using the so called ``continuous Steiner symmetrization''.
 
The radial symmetry results in \cite{Br} (via continuous symmetrization) and in \cite{DF,DFM} (via local moving planes) follow from more general local symmetry results \cite{Br2,DF,DFM} which do not require $f\ge 0$. These describe the only way in which radial symmetry may be broken through the formation of ``plateaus'' and radially symmetric cores placed arbitrarily on the top of them. The notion of local symmetry, introduced by Brock in \cite{Br2}, is very related to rearrangements. Nevertheless, in \cite{DF,DFM}, local symmetry results are proved using a local version of the moving planes method. 

Our technique leads to symmetry only when $\Omega$ is a ball. Instead, the technique used in \cite{Br}, as well as the moving planes method used in \cite{GiNiNir,DaSc,DF,DFM} are still applicable when the domain is not a ball, but is symmetric about some hyperplane and convex in the normal direction to this hyperplane. See \cite{BN} for an improved version of the moving planes method that allowed to treat domains with corners.

A feature of the original moving planes method in \cite{GiNiNir} and \cite{DaSc} is that, in addition to the radial symmetry, leads to $\frac{\partial u}{\partial r}<0$, for $r=|x|\in(0,R)$, $R$ being the radius of the ball $\Omega$. However, with discontinuous $f$ we cannot expect so much, even with $p=2$. A simple counterexample is constructed as follows: let $v$ be the solution of 
\[\left\{\begin{array}{l}
-\Delta_p v =1\quad \text{in }A=\{1/2<r<1\},
\\ v=0 \quad \text{on } \partial A\,.
\end{array}\right.\]
Then, $v$ is radial and positive, and thus it attains its maximum on a sphere $\{r=\rho_0\}$, for some $\rho_0\in(1/2,1)$. We readily check that $u=v\chi_{\{r>\rho_0\}} + (\max_{\overline A} v)\chi_{\{r\le\rho_0\}}$ is a solution of $\eqref{equaciou}$ for $\Omega=\{r<1\}$ and $f= \chi_{[0,\max_{\overline A} v)}\ge 0$,  and $u$ is constant on the ball $\{r\le\rho_0\}$.

Related to this, Theorem \ref{mainresult} states that $u$ is radial with $\frac{\partial u}{\partial r}<0$ in the annulus or punctured ball $\{ 0< u <\max_{\overline \Omega} u\}$ (see Lemma \ref{lema3}). Nevertheless, $u$ might attain its maximum in a concentric ball of positive radius $\{u=\max_{\overline \Omega} u\}$, as occurs in the preceding example. 

The following three distribution-type functions will play a central role in our proof:
\begin{equation} \label{IJK}
I(t)= \int_{\{u > t\}} f(u)\,d\mathcal H^n\,,\quad J(t)  = \mathcal H^n\bigl(\{u > t\}\bigr)\,,\quad  K = I^\alpha J^\beta\,. 
\end{equation}
These functions are defined for $t\in(-\infty,M)$, where $M=\max_{\Omega} u$. The parameters $\alpha$, $\beta$ in \eqref{IJK}, that are appropriately chosen depending on $p$ and $n$, are given by
\begin{equation}\label{alphaibeta1}
\alpha= p'= \frac{p}{p-1}\,,\quad \beta=  \frac{p-n}{n(p-1)}\,.
\end{equation}

Lions \cite{Li} in the case $p=n=2$ and Kesavan-Pacella \cite{KePa} in the cases $p=n\ge 2$ used the distribution type function $K=I^\alpha$ (note that our $\beta=0$ in these cases). By considering the function $K=I^\alpha J^\beta$ we are able to treat the cases $p\neq n$.

Observe that for any $t<0$ the value of $K(t)$ is equal to the constant 
\begin{equation}\label{equacioKzero}
K(0^-) = \lim_{t\to 0^{-}} K(t) = \biggl(\int_{\Omega} f(u)\,d\mathcal H^n\biggr)^{\negmedspace \alpha}(\mathcal H^n(\Omega))^\beta\,.
\end{equation} 
\begin{rem}\label{remark}
As we shall see, it is essential for our argument to work that the function $K$ in \eqref{IJK} be nonincreasing. This is trivially the case when $\alpha, \beta$ given by \eqref{alphaibeta1} are nonnegative, and thus this occurs when $p\ge n$. 

However, it may happen that, even with $\beta$ being negative, $K$ could be nonincreasing. This situation occurs under assumption (b) of Theorem \ref{mainresult}, i.e., $1<p<n$ and $\phi\le f \le\frac{pn}{n-p} \phi$ for some nonincreasing function $\phi\ge 0$. Indeed, in Lemma \ref{lema1} (iii) we will prove that, in this case, $K$ is absolutely continuous. Thus, to  verify that $K$ is nonincreasing we need to prove that $-K'\ge 0$ a.e.

Now, using statement \eqref{igualtat-K'} of the lemma, we obtain 
\[-K' =  \bigl\{\alpha I^{\alpha-1} J^\beta f + \beta I^\alpha J^{\beta-1}\bigl\} (-J')
= \bigl\{\alpha f + \beta I/J\bigl\} I^{\alpha-1} J^{\beta}(-J')\quad \mbox{a.e.}\]
From this we see that $-K'(t)$ has the same sign as $\bigl\{\alpha f(t) + \beta I(t)/J(t)\bigl\}$, since $I,\,J,\,-J'$ are nonnegative by definition. Thus, since $\beta<0$, we need $I(t)/J(t) + (\alpha/\beta) f(t) \le 0$ a.e. Observing that $I(t)/J(t)$ is the mean of $f(u)$ over the superlevel set $\{u>t\}$, we easily conclude that a sufficient condition for $I/J + (\alpha/\beta) f \le 0$ is that $f(s) \le -(\alpha/\beta)f(t)$, whenever $s>t$. And this is satisfied if $\phi\le f\le -(\alpha/\beta)\phi$ for some nonincreasing $\phi\ge 0$. Replacing $\alpha,\beta$ by their values in \eqref{alphaibeta1} we obtain the condition (b) in Theorem \ref{mainresult} since $-\alpha/\beta=pn/(n-p)$.  
\end{rem} 

\begin{rem}
Although the statement of Theorem \ref{mainresult} concerns solutions
of \eqref{equaciou} that are $C^1(\Omega)$, the arguments we shall use in its proof are often performed, in a standard way, with functions that are only of bounded variation. Nevertheless, from regularity results for degenerate elliptic equations of the type \eqref{equaciou}, we have that every bounded solution to \eqref{equaciou} is $C^{1,\alpha}(\Omega)$ for some $\alpha>0$. See, for instance, Lieberman \cite{Lb}. Thus, there is no loss of generality in assuming, in Theorem \ref{mainresult}, that $u \in C^1(\Omega)$ and this will turn some parts of its proof less technical. 
\end{rem}

\section{Preliminaries and proof of Theorem \ref{mainresult}}
All the technical details that will be needed in the proof of Theorem \ref{mainresult} are contained in the following three lemmas. The two first of them would be immediate if we assumed that $u$ and its level sets were regular enough. The third one leads to the radial symmetry of $u$ and the property $\frac{\partial u}{\partial r}<0$ in the annulus $\{ 0< u <\max_{\Omega} u\}$. The arguments used in their proofs are rather standard: for example, a finer version of inequality \eqref{desJ't} can be found in \cite{BrZi}. Nevertheless, we include them here to give a more self-contained treatment. 

\begin{lem}\label{lema1}
Let $\alpha,\beta$ be arbitrary real numbers and $\Omega\subset\R^n$ a bounded smooth domain. Assume that $u\in C^1(\Omega)\cap C^0(\overline \Omega)$ is nonnegative and $u|_{\partial\Omega}\equiv 0$. Let $f\in L^\infty_{\text{loc}}([0,+\infty))$ and let $I, J, K$ are defined by \eqref{IJK}. Let $M=\max_{\overline \Omega}u$. Then:

(i) The functions $I$, $J$ and $K$ are a.e. differentiable and  
\begin{equation} \label{igualtat-K'}
-K'(t) =  \bigl\{\alpha I(t)^{\alpha-1} J(t)^\beta f(t) + \beta I(t)^\alpha J(t)^{\beta-1}\bigl\} (-J'(t))\,\quad\text{for a.e. }t\,. 
\end{equation}

(ii) For a.e. $t\in(0,M)$, we have $\mathcal H^{n-1}\bigl(u^{-1}(t)\cap\{|\nabla u|=0\}\bigr)=0$ and
\begin{equation}\label{desJ't}
-J'(t) \ge \int_{u^{-1}(t)} \frac{1}{|\nabla u|}\, d\mathcal H^{n-1}\,.
\end{equation}

(iii) Assume furthermore that hypothesis (b) in Theorem \ref{mainresult} holds and that $u$ is a weak solution of \eqref{equaciou}. Then, $I$, $J$ and $K$ are absolutely continuous functions for $t<M$.  

\begin{proof}
(i) The functions $I$ and $J$ are nonincreasing by definition and hence differentiable almost everywhere. Furthermore, they define nonpositive Lebesgue-Stieltjes measures $dI$ and $dJ$ on $(0,M)$. By definition of Lebesgue integral, using approximation by step functions, we find that 
\[I(t)= \int_{\{u\ge t\}} f(u)\,d\mathcal H^n = -\int_t^{M^+} f(t) dJ(t)\]
and hence $dI= f dJ$. From this, it follows that $I'(t) = f(t)\,J'(t)$ for $dJ$-a.e.$\,t$ in $(0,M)$.  But since $|\nabla u|$ is bounded in $\{u\ge t\}$, $J$ is strictly decreasing. This leads to $\mathcal L\ll dJ$, where $\mathcal L$ is the Lebesgue measure in $(0,M)$. Therefore we have  $I'(t) = f(t)\,J'(t)$ for a.e. $t$ (in all this paper, unless otherwise indicated, a.e. is with respect to the Lebesgue measure). As a consequence, \eqref{igualtat-K'} holds. 

(ii) Start defining 
\[ 
J_0(t) = \mathcal H^n\bigl(\{u > t, |\nabla u|>0\}\bigr)\,. 
\]
Let $\epsilon>0$ and $T \in(0,M)$. Let $u_T=\max(u,T)$. We extend $u_T$ outside $\Omega$ by the constant $T$, to obtain a Lipschitz function defined in all $\R^n$. Applying to $u_T$ the coarea formula for Lipschitz functions (see, for example, Theorem 2 in sec.  3.4.3 of  \cite{EvGa}), we can compute
\begin{equation}\label{pork}
\begin{split}
\mathcal H^n(\{u > T, |\nabla u|>\epsilon\})&= \int_{\R^n} |\nabla u_T| \frac{\chi_{\{u > T, |\nabla u|>\epsilon\}}}{|\nabla u|} \, d\mathcal H^n \\
&= \int_T^M \int_{u^{-1}(t)}\frac{\chi_{\{u > T, |\nabla u|>\epsilon\}}}{|\nabla u|}\, d\mathcal H^{n-1}\, dt\\
&= \int_T^M \int_{u^{-1}(t)\cap\{|\nabla u|>\epsilon\}} \frac{1}{|\nabla u|}\, d\mathcal H^{n-1}\, dt \,.
\end{split}
\end{equation}
For any given $\epsilon>0$, $|\nabla u|^{-1} \,\chi_{\{u > T, |\nabla u|>\epsilon\}}$ is $\mathcal H^n$-summable. Now, by monotone convergence, letting $\epsilon\to 0$ in \eqref{pork} we find that \eqref{pork} also holds for $\epsilon=0$ (and arbitrary $T$). We deduce that 
$J_0(t)$ is an absolutely continuous function and that
\begin{equation}\label{equacioJ'0}
-J_0'(t) = \int_{u^{-1}(t)\cap\{|\nabla u|>0\}} \frac{1}{|\nabla u|}\, d\mathcal H^{n-1}\,, \quad \text{for a.e. } t\in(0,M)\,. 
\end{equation}  

Applying one more time the coarea formula to $u_T$ we obtain
\[
0= \int_{\R^n} |\nabla u_T| \chi_{\{ u>T,|\nabla u|=0\}} = \int_T^M \mathcal H^{n-1}\bigl(u^{-1}(t)\cap\{|\nabla u|=0\}\bigr)\, dt \,.
\]
We conclude that, for a.e. $t$, the set $u^{-1}(t)\cap\{|\nabla u|=0\}$ has zero $\mathcal H^{n-1}$-measure. Having this into account we may change \eqref{equacioJ'0} for the apparently finer
\begin{equation}\label{equacioJ'0stgr}
-J_0'(t) = \int_{u^{-1}(t)} \frac{1}{|\nabla u|}\, d\mathcal H^{n-1}\,, \quad \text{for a.e. } t\in(0,M)\,. 
\end{equation}

Next, observe that for a.e. $t\in(0,M)$ (where both $J'(t)$ and $J'_0(t)$ exist) we have the inequality
\[
-J'(t) = \lim_{s\to t^+} \frac {\mathcal H^n\bigl(\{s\ge u> t\}\bigr)}{s-t}
\ge \lim_{s\to t^+} \frac {\mathcal H^n\bigl(\{s\ge u> t, |\nabla u|>0\}\bigr)}{s-t}
= -J_0'(t) \,.
\]
Combining this with \eqref{equacioJ'0stgr} we get finally \eqref{desJ't}. It is easily verified that equality holds in \eqref{desJ't} for a.e. $t$ if the set $\{|\nabla u|=0\}$ has zero $\mathcal H^n$-measure. 

(iii) If $p<n$ and $\phi\le f\le \frac{np}{n-p} \phi$ for some nonincreasing $\phi\ge0$, then a solution of $-\Delta_p u = f(u)$ will be $p$-harmonic in $\{u\ge t_0\}$, where $t_0\in[0,+\infty]$ satisfies that $\phi(t)>0$ for $t<t_0$ and $\phi(t)\equiv 0$ for $t>t_0$. Hence, if $t_0<+\infty$, we will  have that $u\equiv t_0=M=\max_{\overline{\Omega}}u$ in $\{u\ge t_0 \}$. Therefore, for every $t<  M=\max_{\overline{\Omega}}u$ we have that $-\Delta_p u = f(u) \ge \phi(u)\ge\phi(t) >0$ in $\{0\le u< t\}$. But since $f(u)\in L^\infty(\Omega)$, we can apply a result of  H. Lou, Theorem 1.1 in \cite{Lo} and find that $f(u)$ vanishes a.e. in the set $\{|\nabla u|=0\}\cap \{0\le u< t\}$. Since $f(u) \ge \phi(t)>0$ in $\{0\le u< t\}$, this is only possible if the singular set $\{|\nabla u|=0, u< t\}$ has zero measure. Therefore, we have $J(t)=J_0(t)+ \mathcal H^{n}(\{u=M\})$ for every $t<M$ and thus $J$ is an absolutely continuous function (since we have shown that $J_0$ is absolutely continuous), at least for $t<M$. From this, it is immediate to see that also $I$ and $K$ are absolutely continuous for $t<M$.
\end{proof}
\end{lem}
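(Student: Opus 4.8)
\medskip
\noindent\textbf{Proof plan.}
The whole lemma rests on two classical tools --- the Lebesgue--Stieltjes calculus for monotone functions (for (i)) and the coarea formula for Lipschitz functions (for (ii)) --- together with one fine regularity fact for part (iii). For (i), $I$ and $J$ are nonincreasing and right-continuous, hence a.e.\ differentiable and inducing nonpositive Lebesgue--Stieltjes measures $dI$, $dJ$ on $(0,M)$. Writing $\nu$ for the push-forward of $\mathcal H^n|_{\{u>0\}}$ under $u$, one has $\nu=-dJ$ and $I(t)=\int_{(t,M]}f\,d\nu$, hence $dI=f\,dJ$; comparing the Lebesgue decompositions of $dI$ and $f\,dJ$ with respect to $dt$ (the singular part of $f\,dJ$ is $f\cdot(dJ)_s$, still singular) gives $I'(t)=f(t)J'(t)$ for a.e.\ $t$. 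Then, at a.e.\ $t$, the product and chain rules --- legitimate since $\alpha=p'>1$ and, where needed, $I,J>0$ --- give $-K'=(\alpha I^{\alpha-1}J^\beta f+\beta I^\alpha J^{\beta-1})(-J')$, i.e.\ \eqref{igualtat-K'}.

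For (ii), set $J_0(t):=\mathcal H^n(\{u>t,\ |\nabla u|>0\})$, fix $T\in(0,M)$, and extend $u_T:=\max(u,T)$ by the constant $T$ to a globally Lipschitz function on $\R^n$. Feeding the coarea formula of \cite{EvGa} the nonnegative test function $|\nabla u|^{-1}\chi_{\{u>T,\,|\nabla u|>\epsilon\}}$ and using $\nabla u_T=\nabla u$ on $\{u>T\}$ yields
\[
\mathcal H^n\bigl(\{u>T,\ |\nabla u|>\epsilon\}\bigr)=\int_T^M\int_{u^{-1}(t)\cap\{|\nabla u|>\epsilon\}}\frac{d\mathcal H^{n-1}}{|\nabla u|}\,dt ;
\]
letting $\epsilon\downarrow0$ by monotone convergence shows $J_0$ is absolutely continuous with $-J_0'(t)=\int_{u^{-1}(t)\cap\{|\nabla u|>0\}}|\nabla u|^{-1}\,d\mathcal H^{n-1}$ a.e. Applying the coarea formula once more with test function $\chi_{\{u>T,\,|\nabla u|=0\}}$, on which $|\nabla u_T|\equiv 0$, gives $\int_T^M\mathcal H^{n-1}\bigl(u^{-1}(t)\cap\{|\nabla u|=0\}\bigr)\,dt=0$, so $\mathcal H^{n-1}\bigl(u^{-1}(t)\cap\{|\nabla u|=0\}\bigr)=0$ for a.e.\ $t$, and the restriction $\{|\nabla u|>0\}$ may be dropped from the formula for $-J_0'$. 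Finally $J-J_0=\mathcal H^n(\{u>t,\,|\nabla u|=0\})$ is nonincreasing, hence $-J'\ge-J_0'$ a.e.\ (or compare difference quotients directly), which is \eqref{desJ't}; equality holds when $\{|\nabla u|=0\}$ is $\mathcal H^n$-null.

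For (iii), let $t_0:=\sup\{t:\phi(t)>0\}$. From $0\le\phi(u)\le f(u)\le\frac{np}{n-p}\phi(u)$ we get $f(u)\equiv0$ on $\{u>t_0\}$, so $u$ is $p$-harmonic there; if $t_0<\infty$ the maximum principle (boundary datum $t_0$) forces $\{u>t_0\}=\emptyset$, i.e.\ $t_0\ge M$, and hence for every $t<M$ we have $-\Delta_p u=f(u)\ge\phi(t)>0$ on $\{0\le u<t\}$. Now invoke Lou's theorem (Theorem~1.1 of \cite{Lo}): for a weak solution of $-\Delta_p u=g$ with $g\in L^\infty_{\mathrm{loc}}$, $g$ vanishes $\mathcal H^n$-a.e.\ on $\{|\nabla u|=0\}$. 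Applied here, $f(u)=0$ a.e.\ on $\{|\nabla u|=0\}\cap\{u<t\}$, but $f(u)\ge\phi(t)>0$ there, so $\mathcal H^n(\{|\nabla u|=0,\,u<t\})=0$ for each $t<M$, whence $\mathcal H^n(\{|\nabla u|=0,\,u<M\})=0$. Therefore $\{u>t,\,|\nabla u|=0\}$ coincides with $\{u=M\}$ up to a null set, so $J(t)=J_0(t)+\mathcal H^n(\{u=M\})$ is absolutely continuous for $t<M$; since $f(u)$ is bounded and $I,J$ stay bounded and bounded away from $0$ on compact subintervals of $(-\infty,M)$, the identity $dI=f\,dJ$ and $K=I^\alpha J^\beta$ carry the absolute continuity over to $I$ and $K$.

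\medskip
\noindent\textbf{Expected main difficulty.}
Parts (i) and (ii) are routine measure theory; the real content is (iii), namely excluding that $\{|\nabla u|=0\}$ has positive measure inside $\{f(u)>0\}$, which is precisely what upgrades the ``soft'' absolute continuity of $J_0$ to absolute continuity of $J$ itself. This is exactly where the one-sided bound $f\le\frac{np}{n-p}\phi$ --- keeping $-\Delta_p u$ bounded away from $0$ below the maximum --- must be combined with the nontrivial pointwise vanishing result of Lou; without such an input the Pohozaev/isoperimetric argument used later in the paper would have no control on the singular set.
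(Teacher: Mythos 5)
Your proposal is correct and follows essentially the same route as the paper in all three parts: the Lebesgue--Stieltjes identity $dI=f\,dJ$ for (i), the coarea formula applied to $u_T=\max(u,T)$ with the $\epsilon$-cutoff and monotone convergence (plus the second coarea application giving $\mathcal H^{n-1}\bigl(u^{-1}(t)\cap\{|\nabla u|=0\}\bigr)=0$) for (ii), and the $p$-harmonicity/maximum-principle reduction to $f(u)\ge\phi(t)>0$ on $\{u<t\}$ combined with Lou's theorem for (iii). The only slight deviation is in (i), where you pass from $dI=f\,dJ$ to $I'=fJ'$ a.e.\ by comparing Lebesgue decompositions with respect to $dt$, a marginally cleaner step than the paper's argument via $I'=fJ'$ $dJ$-a.e.\ together with $\mathcal L\ll dJ$; the substance is the same.
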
 
 
\begin{lem}\label{lema2}
Under the assumptions of Theorem \ref{mainresult}, let $M=\max_{\overline{\Omega}} u$. We have the following:

(i)  It holds the Gauss-Green type identity
\begin{equation}\label{teodivergencia}
I(t) = \int_{u^{-1}(t)} |\nabla u|^{p-1} d\mathcal H^{n-1}, \quad \text{for a.e. } t\in(0,M)\,.
\end{equation}

(ii) It holds the isoperimetric inequality
\begin{equation}\label{isoperimetricinequality}
\mathcal H^{n-1}(u^{-1}(t)) \ge c_n \bigl(\mathcal H^{n}(\{u>t\})\bigr)^{\frac{n-1}{n}} = c_n J(t)^{\frac{n-1}{n}} \,, \quad \text{for a.e. } t\in(0,M)\,,
\end{equation}
where $c_n$ is the optimal isoperimetric constant in $\R^n$, $c_n=\mathcal H^{n-1}(\partial B)
\bigl( \mathcal H^{n}(B)\bigr)^{\frac{1-n}{n}}$ with $B$ being a ball  in $\R^n$.
\begin{proof}
(i) Since the function $u$ is of bounded variation locally in $\Omega$, we know from the coarea theorem for BV functions (see Theorem 1, sec. 5.5 of \cite{EvGa}) that the sets $\{u>t\}$ have finite perimeter for a.e. $t$.  For the measure theoretic boundary $\partial_*\{u>t\}$ (see section 5.8 of \cite{EvGa}), we readily check that $\{u=t, |\nabla u|>0\} \subset \partial_*\{u>t\} \subset u^{-1}(t)$. But recall from  Lemma \ref{lema1} (ii) that $\mathcal H^{n-1}\bigl(u^{-1}(t)\cap\{|\nabla u|=0\}\bigr)=0$ for a.e. $t$. We conclude that $\bigl(\int_{u^{-1}(t)} |\nabla u|^{p-1}\, d\mathcal H^{n-1}\bigr)$ and $\bigl(\int_{\partial_*\{u>t\}} |\nabla u|^{p-1}\, d\mathcal H^{n-1}\bigr)$ are equal for a.e. $t$.

On the other hand, the vector field $-\nabla u$ is perpendicular to the regular surface $u^{-1}(t)\cap\{|\nabla u|>0\}$. We have just seen that this regular surface fills almost all $\partial_*\{u>t\}$, in the sense of $\mathcal H^{n-1}$-measure. As a conclusion, if $\nu$ is the measure theoretical normal vector for $\{u>t\}$ then $-\nabla u\cdot\nu = |\nabla u|$ $\mathcal H^{n-1}$-a.e. on $\partial_*\{u>t\}$.
 
Since $u$ solves \eqref{equaciou}, by the generalized Gauss-Green theorem (Theorem 1, sec. 5.8. of \cite{EvGa}), we have 
\begin{equation}\label{111}
I(t) = \int_{\{u>t\}}f(u)d\mathcal H^n = \int_{\partial_*\{u>t\}} |\nabla u|^{p-1} d\mathcal H^{n-1}= \int_{u^{-1}(t)} |\nabla u|^{p-1} d\mathcal H^{n-1}\,,
\end{equation}
for a.e. $t\in(0,M)$. Although that the precise version of Gauss-Green theorem we cite applies to a $C^1_c(\Omega)$ vector field, and we only have $|\nabla u|^{p-2}\nabla u\in C^0(\Omega)$, this can easily be handled as follows. Given $t$, we approximate uniformly in $\{u\ge t\}$ the continuous vector field $|\nabla u|^{p-2}\nabla u$ by a sequence of $C^1_c(\Omega)$ vector fields $(\phi_n)$ to which we can apply the theorem. Doing so $\nabla\cdot\phi_n$ converges weakly to $f(u)$, and this is enough for our purposes. Indeed, for each $\phi_n$ we have
\[\int_{\{u> t\}} \nabla\cdot \phi_n  = \int_{\partial_*\{u> t\}} \phi_n \cdot \nu \,d\mathcal H^{n-1}\,,\quad \text{for a.e. }t\in(0,M)\,.\]
Now, letting $n\to \infty$ we obtain \eqref{111}, and hence \eqref{teodivergencia}. 

(ii) We have seen that $\{u>t\}$ is a bounded set  of finite perimeter for a.e. $t\in(0,M)$. Thus the isoperimetric inequality \eqref{isoperimetricinequality} with the best constant follows immediately from Theorem 2 and the Remark that follows it in Section 5.6.2 of \cite{EvGa}.
\end{proof}
\end{lem}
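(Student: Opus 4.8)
The plan is to handle parts (i) and (ii) separately, both resting on one common fact: the superlevel sets $\{u>t\}$ have finite perimeter for almost every $t\in(0,M)$. I would extract this from the coarea formula for $BV$ functions applied to $u$ (which is locally of bounded variation, being $C^1(\Omega)\cap C^0(\overline\Omega)$): the total variation of $u$ over $\Omega$ equals $\int_0^M \mathcal H^{n-1}(\partial_*\{u>t\})\,dt$, which is finite, so $\{u>t\}$ is a set of finite perimeter for a.e.\ $t$. Combining this with Lemma \ref{lema1}(ii), which gives $\mathcal H^{n-1}\bigl(u^{-1}(t)\cap\{|\nabla u|=0\}\bigr)=0$ for a.e.\ $t$, together with the elementary inclusions $\{u=t,\,|\nabla u|>0\}\subset\partial_*\{u>t\}\subset u^{-1}(t)$, I can identify $\partial_*\{u>t\}$ with the full level set $u^{-1}(t)$ up to an $\mathcal H^{n-1}$-null set, for a.e.\ $t$.

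For (i): on the regular part $\{u=t,\,|\nabla u|>0\}$ the outward measure-theoretic unit normal of $\{u>t\}$ is $\nu=-\nabla u/|\nabla u|$, so the normal trace of the continuous vector field $-|\nabla u|^{p-2}\nabla u$ equals $|\nabla u|^{p-1}$, $\mathcal H^{n-1}$-a.e.\ on $\partial_*\{u>t\}$. Since $u$ solves \eqref{equaciou} weakly, the distributional divergence of $-|\nabla u|^{p-2}\nabla u$ is precisely $f(u)\in L^\infty_{\mathrm{loc}}(\Omega)$. Applying the generalized Gauss--Green theorem for sets of finite perimeter to this field over $\{u>t\}$ then yields $I(t)=\int_{u^{-1}(t)}|\nabla u|^{p-1}\,d\mathcal H^{n-1}$ for a.e.\ $t$. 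The one technical subtlety is that the standard statement of Gauss--Green one cites requires a $C^1_c(\Omega)$ field, whereas $|\nabla u|^{p-2}\nabla u$ is only continuous; I would get around this by approximating it uniformly on $\{u\ge t\}$ by fields $\phi_k\in C^1_c(\Omega)$ with $\nabla\cdot\phi_k\rightharpoonup f(u)$, applying the theorem to each $\phi_k$ in the identity $\int_{\{u>t\}}\nabla\cdot\phi_k=\int_{\partial_*\{u>t\}}\phi_k\cdot\nu\,d\mathcal H^{n-1}$, and passing to the limit.

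For (ii): this is simply the sharp isoperimetric inequality for sets of finite perimeter in $\R^n$, applied to the bounded set $\{u>t\}$, which we have just seen has finite perimeter for a.e.\ $t$. Since the perimeter of a finite-perimeter set is $\mathcal H^{n-1}$ of its measure-theoretic boundary, and since $\mathcal H^{n-1}(\partial_*\{u>t\})=\mathcal H^{n-1}(u^{-1}(t))$ for a.e.\ $t$ by the above identification, the inequality $\mathcal H^{n-1}(u^{-1}(t))\ge c_n J(t)^{(n-1)/n}$ with the optimal constant $c_n$ follows directly.

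The main obstacle I expect is in (i): making the identification of $u^{-1}(t)$ with $\partial_*\{u>t\}$ rigorous for a.e.\ $t$, and, above all, justifying the divergence theorem for the merely continuous (not $C^1$) $p$-Laplacian flux $|\nabla u|^{p-2}\nabla u$ via the approximation scheme just described. Once these measure-theoretic points are settled, the remaining computation is immediate, and part (ii) is essentially a citation.
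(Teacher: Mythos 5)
Your proposal is correct and follows essentially the same route as the paper: finite perimeter of $\{u>t\}$ via the BV coarea theorem, identification of $\partial_*\{u>t\}$ with $u^{-1}(t)$ up to $\mathcal H^{n-1}$-null sets using Lemma \ref{lema1} (ii), the normal trace $-\nabla u\cdot\nu=|\nabla u|$, the generalized Gauss--Green theorem with the same uniform $C^1_c$ approximation of the merely continuous flux $|\nabla u|^{p-2}\nabla u$, and the sharp isoperimetric inequality for sets of finite perimeter for part (ii).
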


Radial symmetry will follow from next lemma after having proved that hypothesis (1) and (2) on it hold. For a detailed discussion on a very similar question see the article of Brothers and Ziemer \cite{BrZi}. Here, we present an ad hoc argument inspired by this article.

\begin{lem}\label{lema3}
Assume that $f\in L^\infty_{\text{loc}}([0,+\infty))$ is nonnegative. Let $\Omega$ be a ball in $\R^n$ and $u\in C^1(\Omega)\cap C^0(\overline{\Omega})$ be a solution of \eqref{equaciou} in the weak sense. Let $M=\max_{\overline{\Omega}} u$. Suppose that for a.e. $t\in(0,M)$
\begin{enumerate}
\item[(1)] $\{u>t\}$ is a ball (centered at some point in $\R^n$, possibly depending on $t$)
\end{enumerate}
and
\begin{enumerate}
\item[(2)] $|\nabla u|$ is constant on $\partial\{u>t\}$.
\end{enumerate}

Then, $u$ is radially symmetric. In addition, $\frac{\partial u}{\partial r} <0$ in the annulus or punctured ball $\{0<u<\max_{\overline{\Omega}}u\}$, but $u$ could achieve its maximum in a ball of positive radius.     

\begin{proof} 
Denote by $\Theta$ the set of $t\in(0,M)$ for which $\{u>t\}$ is a ball. As $\Theta$ is a dense subset (its complementary has zero measure), for any $t\in(0,M)$ we have $\{u>t\}= \bigcup_{s>t,\,s\in\Theta} \{u>s\}$. Thus, every superlevel set $\{u>t\}$ is a increasing union of balls with bounded diameter, hence it is also a ball. Therefore $\Theta=(0,M)$ and we have 
\[\{u>t\}= B(x(t); \rho(t))\]
for some $x(t)$, $\rho(t)$ defined for every $t\in(0,M)$.  

From the continuity of $\nabla u$ and hypothesis $(2)$ we deduce that $|\nabla u|$ is constant on $\partial B(x(t); \rho(t))$ for every $t\in(0,M)$. Besides, as $u$ is a solution of \eqref{equaciou}, the Gauss-Green theorem leads to
\[\mathcal H^{n-1}\bigl(\partial B(x(t); \rho(t))\bigr) |\nabla u|^{p-1}\bigl(\partial B(x(t); \rho(t))\bigr) = 
 \int_{B(x(t); \rho(t))} f(u)\,\,d\mathcal H^{n}\,.\] 
But $f(u)\ge 0$ and, by the maximum principle, it is impossible that $f(u)\equiv 0$ on some $\{u>t\}$. We conclude that $\nabla u$ does not vanish in the open set $\{0<u<M\}$. 

Having now that $u$ is a $C^1$ function whose gradient never vanishes in the open set $\{u<M\}$, it is easily shown that $J(t)=\mathcal H^n\bigl(\{u>t\}\bigr)$ is locally Lipschitz in $(0,M)$. Therefore, also $\rho(t) = \bigl(J(t)/\omega_n)^{1/n}$ is locally Lipschitz ($\omega_n= \mathcal H^n(B_1)$ is the volume of a unit ball in $\R^n$). Moreover, since $B(x(t);\rho(t))=\{u>t\}\supset\{u>s\} =B(x(s);\rho(s))$ for $t<s$, we deduce that
\begin{equation}\label{lultima}
|x(t)-x(s)|\le \rho(t)-\rho(s) \quad \mbox{for }t< s\,.
\end{equation}
Thus, $x=x(t)$ is also locally Lipschitz.    

Now suppose that $u$ were not radially symmetric. Then $x$ would not be identically constant in $(0,M)$ and hence we could find some $t_0\in(0,M)$ such that the velocity vector $y =\frac{d}{dt}x (t_0)$ would exist and be nonzero. But in such case, setting $z=y/|y|$, $P(t)=x(t)+\rho(t)z$ and $Q(t)= x(t)-\rho(t)z$, by hypothesis, we would have 
\[u(P(t))\equiv u(Q(t))\equiv t \quad \mbox{for all }t\,,\]
and $\nabla u(P(t_0))\cdot z = -|\nabla u(P(t_0))|$ while $\nabla u(Q(t_0))\cdot z = |\nabla u(Q(t_0))|$. This would lead to
\[1= \frac{d}{dt}\bigr|_{t_0} u(P(t)) = \nabla u(P(t_0)) \cdot (|y|+\rho'(t_0))z
   =  -|\nabla u(P(t_0))|(|y|+\rho'(t_0))\]
and 
\[1= \frac{d}{dt}\bigr|_{t_0} u(Q(t)) = \nabla u(Q(t_0)) \cdot (|y|-\rho'(t_0))z
   =  |\nabla u(Q(t_0))|(|y|-\rho'(t_0))\,.\]
But we must have $|\nabla u(P(t_0))|= |\nabla u(Q(t_0))|$ since both $P(t_0)$ and $Q(t_0)$ belong to $\partial B(x(t_0);\rho(t_0))=\partial\{u>t_0\}$. Then, it would follow that $|y|=0$, which is a contradiction.

As a consequence, $u$ is to be is radially symmetric. We already justified that $|\nabla u|$ does not vanish in $\{0<u<M\}$, hence $\frac{\partial u}{\partial r} <0$ in this open ring. However we may not discard the possibility of $u$ being constant on a closed non-degenerate ball $\{u=M\}$, as happens in the example given in Section 1.
\end{proof}
\end{lem}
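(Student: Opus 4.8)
The plan is to work directly with the superlevel sets and deduce from the two hypotheses, first, that every superlevel set is a ball (not just for a.e.\ $t$), second, that $\nabla u$ never vanishes in $\{0<u<M\}$, and third, that the centers of these balls are all the same point. Here is the order I would carry this out.

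First I would upgrade hypothesis (1) from ``a.e.\ $t$'' to ``every $t\in(0,M)$''. Let $\Theta\subset(0,M)$ be the (full-measure, hence dense) set of $t$ for which $\{u>t\}$ is an open ball $B(x(t);\rho(t))$. For an arbitrary $t\in(0,M)$ we have $\{u>t\}=\bigcup_{s>t,\,s\in\Theta}\{u>s\}$, an increasing union of open balls whose radii are bounded (they are contained in $\Omega$); such a union is again an open ball. Hence $\Theta=(0,M)$ and we obtain functions $x(t),\rho(t)$ defined for all $t\in(0,M)$ with $\{u>t\}=B(x(t);\rho(t))$. By continuity of $\nabla u$, hypothesis (2) then holds for every $t$ as well: $|\nabla u|$ is a constant, call it $g(t)$, on each sphere $\partial B(x(t);\rho(t))$.

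Next I would show $\nabla u\neq 0$ on $\{0<u<M\}$. Apply the Gauss--Green (divergence) theorem to the field $|\nabla u|^{p-2}\nabla u$ on $B(x(t);\rho(t))$, exactly as in Lemma~\ref{lema2}(i): this yields
\[
\mathcal H^{n-1}\bigl(\partial B(x(t);\rho(t))\bigr)\,g(t)^{p-1}=\int_{B(x(t);\rho(t))}f(u)\,d\mathcal H^{n}=I(t)\,.
\]
Since $f(u)\ge 0$ and, by the strong maximum principle, $f(u)$ cannot vanish identically on any superlevel set $\{u>t\}$ (otherwise $u$ would be $p$-harmonic there and attain an interior maximum), the right-hand side is strictly positive; hence $g(t)>0$, i.e.\ $|\nabla u|>0$ on $\partial\{u>t\}$ for every $t\in(0,M)$. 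As these level spheres sweep out all of $\{0<u<M\}$, we get $\nabla u\neq 0$ there. A standard consequence is that $J(t)=\mathcal H^{n}(\{u>t\})=\omega_n\rho(t)^{n}$ is locally Lipschitz on $(0,M)$, so $\rho$ is locally Lipschitz; and since $\{u>s\}\subset\{u>t\}$ for $t<s$ forces $|x(t)-x(s)|\le\rho(t)-\rho(s)$, the center map $x(\cdot)$ is locally Lipschitz too.

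The crux is showing $x(t)$ is constant, and this is the step I expect to be the main obstacle. Argue by contradiction: if $x$ is not constant, pick $t_0$ where the derivative $y=\tfrac{d}{dt}x(t_0)$ exists and $y\neq 0$, set $z=y/|y|$, and follow the two antipodal points $P(t)=x(t)+\rho(t)z$ and $Q(t)=x(t)-\rho(t)z$ on the moving sphere. Since these lie on $\partial\{u>t\}$ for all $t$ (where $u$ takes the value $t$), differentiating $u(P(t))\equiv t$ and $u(Q(t))\equiv t$ at $t_0$ gives, using that $\nabla u$ at $P(t_0)$ points in the $-z$ direction and at $Q(t_0)$ in the $+z$ direction (outward normals, $\nabla u$ points inward toward higher values),
\[
1=-|\nabla u(P(t_0))|\,(|y|+\rho'(t_0)),\qquad 1=|\nabla u(Q(t_0))|\,(|y|-\rho'(t_0))\,.
\]
But $P(t_0)$ and $Q(t_0)$ lie on the same sphere $\partial B(x(t_0);\rho(t_0))$, so $|\nabla u(P(t_0))|=|\nabla u(Q(t_0))|=g(t_0)>0$ by hypothesis (2). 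Subtracting (or dividing) the two identities forces $|y|=0$, a contradiction. Hence $x(t)\equiv x_0$, every superlevel set is a ball centered at $x_0$, and $u$ is radially symmetric about $x_0$. Finally, $\nabla u\neq 0$ on $\{0<u<M\}$ combined with radial symmetry gives $\partial u/\partial r<0$ on that annulus (or punctured ball), while the set $\{u=M\}$, being $\{u>t\}$'s decreasing intersection, is a closed ball possibly of positive radius — as the Section~1 example shows, this cannot be excluded. The delicate points to be careful about are the differentiability of $x$ at a well-chosen $t_0$ (Rademacher, since $x$ is Lipschitz) and the sign/direction bookkeeping for $\nabla u$ at the two antipodal points, which is where the argument really uses both hypotheses together.
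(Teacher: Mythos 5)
Your proposal is correct and follows essentially the same route as the paper's own proof: upgrade hypothesis (1) to all $t$ via the increasing-union-of-balls argument, rule out critical points with Gauss--Green plus the maximum principle, establish local Lipschitz regularity of $\rho$ and $x$, and derive the contradiction by tracking the two antipodal points $P(t)$, $Q(t)$ on the moving spheres. No gaps; the sign bookkeeping at $P(t_0)$ and $Q(t_0)$ matches the paper exactly.
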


Finally we present the proof of the result in this paper. 


\begin{proof}[Proof of Theorem \ref{mainresult}]
We first note that, under the assumptions of the theorem, $K(t)$ is nonincreasing for $t\in(0,M)$, where $M=\max_{\overline{\Omega}}u$, $K$ is given by \eqref{IJK} and $\alpha, \beta$ are given by \eqref{alphaibeta1}. Indeed, under hypothesis (a) of the theorem it is obvious because $\alpha,\beta\ge0$. On the other hand, under hypothesis (b) Lemma \ref{lema1} (iii) applies and hence $K$ is absolutely continuous. But, as shown in Remark \ref{remark}, $-K'\ge 0$ a.e. in this case. Therefore, $K$ is nonincreasing again.  

Since $K(t)$ is nonnegative and nonincreasing, we have (even if $K$ could have jumps)
\begin{equation}\label{desigualtatK}
K(0^-)\ge K(0^+)-K(M^-) \ge \int_0^{M} -K'(t)\,dt\,.
\end{equation}  

Combining \eqref{desigualtatK} and \eqref{igualtat-K'} in Lemma \ref{lema1} (i), we are lead to
\[
K(0^-)\ge \int_0^M \bigl\{\alpha I(t)^{\alpha-1} J(t)^\beta f(t) + \beta I(t)^\alpha J(t)^{\beta-1}\bigl\} (-J'(t)) \,dt\,.
\]
The integrand on the right equals $-K'(t)$ and hence is nonnegative for a.e.$\,t$. Also $-J'(t)$ is nonnegative. Therefore so is the factor in brackets  and we can use inequality \eqref{desJ't} to obtain a further estimate:
\begin{equation}\label{desigualtatK2}
K(0^-)\ge \int_0^M \bigl\{\alpha I(t)^{\alpha-1} J(t)^\beta f(t) + \beta I(t)^\alpha J(t)^{\beta-1}\bigl\} \biggl(\int_{u^{-1}(t)} \frac{1}{|\nabla u|}\, d\mathcal H^{n-1}\biggr) \,dt\,.
\end{equation}
Equalities are obtained when $K$ is absolutely continuous.    

Next we derive the following isoperimetric-Hölder type inequality:
\begin{equation}\label{superdesigualtat}
I(t)^{\frac{1}{p-1}}\biggl(\int_{u^{-1}(t)} |\nabla u|^{-1}\, d\mathcal H^{n-1}\biggr) \ge c_n^{\frac{p}{p-1}} J(t)^{\frac{p(n-1)}{(p-1)n}}\,,\quad \text{for a.e. } t\in(0,M)
\end{equation} 
with $c_n$ as in \eqref{isoperimetricinequality}. To prove \eqref{superdesigualtat}, we use \eqref{teodivergencia} in Lemma \ref{lema2} to conclude that, for a.e.$\,t$,
\[\begin{split}
I(t)^{\frac{1}{p}}\biggl(\int_{u^{-1}(t)} |\nabla u|^{-1}\,& d\mathcal H^{n-1}\biggr)^{\frac{p-1}{p}} =\\
&= \biggl(\int_{u^{-1}(t)} |\nabla u|^{p-1}\,d\mathcal H^{n-1}\biggr)^{\frac{1}{p}}\biggl(\int_{u^{-1}(t)}|\nabla u|^{-1}\,d\mathcal H^{n-1}\biggr)^{\frac{p-1}{p}}\\
&\ge \mathcal H^{n-1}\bigl(u^{-1}(t)\bigr) \ge c_n\mathcal H^n\bigl(\{u> t\}\bigr)^\frac{n-1}{n}
= c_n J(t)^\frac{n-1}{n}\,, 
\end{split}
\]
where the first inequality is a consequence of Hölder's inequality, and the second one of the isoperimetric inequality \eqref{isoperimetricinequality}. We emphasize that both equalities hold simultaneously if and only if $\{u> t\}$ is a ball and $|\nabla u|$ is constant on $u^{-1}(t)$.

Returning to \eqref{desigualtatK2}, we deduce from \eqref{superdesigualtat} 
\begin{equation}\label{desigualtatK3}
\begin{split}
K(0^-)&\ge \int_0^M  \bigl\{\alpha I^{\alpha-1} J^\beta f + \beta I^\alpha J^{\beta-1}\bigl\} \biggl(\int_{u^{-1}(t)} \frac{1}{|\nabla u|}\, d\mathcal H^{n-1}\biggr) \,dt\\
&= \int_0^M \bigl\{\alpha I^{\alpha-1- \frac{1}{p-1}} J^\beta f + \beta I^{\alpha- \frac{1}{p-1}} J^{\beta-1}\bigr\} I^{\frac{1}{p-1}}\biggl(\int_{u^{-1}(t)} \frac{1}{|\nabla u|}\, d\mathcal H^{n-1}\biggr)\,dt\\
&\ge \int_0^M c_n^{\frac{p}{p-1}} \bigl\{\alpha I^{\alpha-1- \frac{1}{p-1}} J^\beta f + \beta I^{\alpha- \frac{1}{p-1}} J^{\beta-1}\bigl\}  J(t)^{\frac{p(n-1)}{(p-1)n}} \,dt 
\,.
\end{split}
\end{equation}
For the last inequality we are using (for second time) that the factor in brackets in \eqref{desigualtatK2} is nonnegative, since $-K'\ge 0$.       

Note that in order to obtain \eqref{desigualtatK3} we are integrating the isoperimetric-Hölder inequality \eqref{superdesigualtat} over almost all the levels. Accordingly, 
\begin{rem}\label{remrem}
A necessary condition for having equalities in  \eqref{desigualtatK3}  is that, for a.e.$\,t\in(0,M)$, $\{u>t\}$ is a ball and $|\nabla u|$ is constant on $u^{-1}(t)$.
\end{rem}  

Next, the values of $\alpha$ and $\beta$ in \eqref{alphaibeta1} are set to satisfy that $\alpha-1- \frac{1}{p-1}=0$ and $\beta-1+ \frac{p(n-1)}{(p-1)n}=0$. Then \eqref{desigualtatK3} becomes 
\[
\begin{split}
K(0^-) &\ge \int_0^M  c_n^{p'} \biggl( p'\,f(t) \,\mathcal H^n\bigl(\{u > t\}\bigr) \,+  \, \frac{p-n}{n(p-1)} \int_{\{u > t\}} f(u)\,d\mathcal H^n \biggr)  \,dt\\
&= c_n^{p'} \int_0^M \int_\Omega   \chi_{\{u > t\}} \biggl(p'\,f(t) \ +  \, \frac{p-n}{n(p-1)} f(u) \biggr) \,d\mathcal H^n\,dt\\
&= c_n^{p'} \biggl( p'\negmedspace\int_{\Omega} F(u)\,d\mathcal H^n  +  \frac{p-n}{n(p-1)} \int_{\Omega} uf(u) \,d\mathcal H^n\biggr)\\
&=c_n^{p'}\frac{p'}{n}\biggl( n\int_{\Omega} F(u)\,d\mathcal H^n  +  \frac{p-n}{p} \int_{\Omega} uf(u)\,d\mathcal H^n \biggr)
\end{split}
\]
for $F(s)= \int_0^{s} f(s')\, ds'$. Recalling \eqref{equacioKzero} we obtain finally the inequality 
\begin{equation}\label{desigualtati}
\frac{n}{p'c_n^{p'}}\mathcal H^n(\Omega)^{\frac{p-n}{n(p-1)}}\biggl(\int_\Omega f(u)\biggr)^{p'}  \ge  n\negmedspace \int_{\Omega} F(u)  +  \frac{p-n}{p} \negthinspace\int_{\Omega} uf(u) \,.
\end{equation}

Now we use for first time that $\Omega$ is a ball. As in \cite{KePa}, a combination of Pohozaev's identity
\[
n\negthinspace \int_{\Omega} F(u)  +  \frac{p-n}{p} \negthinspace\int_{\Omega} uf(u) = \frac{1}{p'} \int_{\negthinspace\partial\Omega} (x\cdot\nu) \biggl|\frac{\partial u}{\partial \nu}\biggr|^p
\] 
and Hölder inequality gives, when  $\Omega$ is a ball of radius $R$, the inequality
\begin{equation}\label{desigualtatii}
n\negthinspace \int_{\Omega} F(u)  +  \frac{p-n}{p} \negthinspace\int_{\Omega} uf(u) \ge \frac{1}{(n\omega_n R^{n-1})^{p'/p}}\frac{R}{p'} \biggl(\int_\Omega f(u)\biggr)^{p'},
\end{equation}
where $\omega_n = \mathcal H^{n}(B_1)$ is the volume of the unit ball in $\R^n$.

To conclude, a straightforward computation (there is no magic behind this: note that all the inequalities obtained throughout this proof are equalities when $u$ is radial) and recalling the value of $c_n$ given in Lemma \ref{lema2} (ii), we check that 
\[\begin{split}
\frac{n}{p'c_n^{p'}} \mathcal H^n(\Omega)^{\frac{p-n}{n(p-1)}} &= \frac{n \mathcal H^n(B_R)^{\frac{p'(n-1)}{n}}}{p' \mathcal H^{n-1}(\partial B_R)^{p'}} \mathcal H^n(B_R)^{\frac{p-n}{n(p-1)}}  
= \frac{n (\omega_n R^{n})^{\frac{p'(n-1)}{n}+\frac{p-n}{n(p-1)}}}{p' (n\omega_n R^{n-1})^{p'}} \\&=\frac{1}{(n \omega_n R^{n-1})^{p'/p}}\frac{R}{p'}\,.
\end{split} \]
This enlightens that \eqref{desigualtati} and \eqref{desigualtatii} are opposite inequalities. Therefore they must be, in fact, equalities. 

It follows, recalling Remark \ref{remrem} within this proof, that for a.e. $t\in(0,M)$, 
\begin{enumerate}
\item[(1)] the level $\{u>t\}$ is a ball
\end{enumerate}
and
\begin{enumerate}
\item[(2)] $|\nabla u|$ is constant on $\partial\{u>t\}$.
\end{enumerate}
But then from Lemma \ref{lema3} we conclude that $u$ is a nonincreasing function of the radius and with $\frac{\partial u}{\partial r}<0$ in $\{0<u<\max_{\overline \Omega} u\}$.  
\end{proof}

\section*{Acknowledgment}
The author thanks Xavier Cabré for interesting discussions on the subject of this paper.


\begin{thebibliography}{DFM}

\bibitem[BN]{BN} H. Berestycki, L. Nirenberg, \textit{On the method of moving planes and the sliding method}, Bol. Soc. Brasil. Mat. 22,1991, pp. 1-37.
\bibitem[Br1]{Br}F. Brock, \textit{Radial symmetry for nonnegative solutions of semilinear elliptic problems involving the $p$-Laplacian}, Progress in PDE 1, 
1997, pp. 46-57. 
\bibitem[Br2]{Br2}F. Brock, \textit{Continuous rearrangement and symmetry of solutions of elliptic problems}, Proc. Indian Acad. Sci. Math. Sci. 110, 2000, pp. 157-204.  
\bibitem[BZ]{BrZi} J. Brothers, W. P. Ziemer,  \textit{Minimal rearrangements of Sobolev functions}, J. reine angew. Math. 384, 1988, pp. 153-179.
\bibitem[DP]{DaPa} L. Damascelli, F. Pacella, \textit{Monotonicity and symmetry of solutions of $p$-Laplace equations, {$1<p<2$}, via the moving plane method},Ann. Scuola Norm. Sup. Pisa Cl. Sci. 26, 1998, pp. 689-707.
\bibitem[DS]{DaSc}L. Damascelli, B. Sciunzi, \textit{Regularity, monotonicity and symmetry of positive solutions of $m$-Laplace equations}, J. Diff. Eq. 206, 2004, pp. 483-515.
\bibitem[DF]{DF} J. Dolbeault, P. Felmer, \textit{Monotonicity up to radially symmetric cores of positive solutions to nonlinear elliptic equations: local moving planes and unique continuation in a non-Lipschitz case}, Nonlinear Anal. 58, 2004, pp. 299-317.
\bibitem[DFM]{DFM} J. Dolbeault, P. Felmer, R. Monneau, \textit{Symmetry and nonuniformly elliptic operators}, Differential Integral Equations 18, 2005, pp. 141-154.   
\bibitem[EG]{EvGa}L. C. Evans, R. F. Gariepy, \textit{Measure Theory and Fine Poperties of Functions}, Studies in Advanced Mathematics, CRC Press, 1992.
\bibitem[GNN]{GiNiNir}B. Gidas, W.-M. Ni, L. Nirenberg, \textit{Symmetry and related properties via the maximum principle}, Commun. Math. Phys. 68, 1979, pp. 209-243. 
\bibitem[KP]{KePa}S. Kesavan, F. Pacella, \textit{Symmetry of positive solutions of a quasilinear elliptic equation via isoperimetric inequalities}, App. Anal 54, 1994, pp. 27-35.
\bibitem[Lb]{Lb}G. M. Lieberman,\textit{Boundary regularity for solutions of degenerate elliptic equations}, Nonlinear Anal. 12, 1988, pp. 1203-1219.
\bibitem[Li]{Li}P. L. Lions, \textit{Two geometrical properties of solutions of semilinear problems}, App. Anal 12, 1981, pp. 267-272.
\bibitem[Lo]{Lo}H. Lou, \textit{On singular sets of local solutions to $p$-Laplace equations},
Chin. Ann. Math. 29B(5), 2008, pp. 521-530.


\end{thebibliography}
\end{document}